\documentclass[a4paper,fleqn]{cas-sc}

\usepackage[authoryear,longnamesfirst]{natbib}
\usepackage{stmaryrd}
\usepackage{amsmath}
\usepackage{tikz-cd}
\def\tsc#1{\csdef{#1}{\textsc{\lowercase{#1}}\xspace}}
\tsc{WGM}
\tsc{QE}

\newtheorem{theorem}{Theorem}
\newtheorem{proposition}{Proposition}
\newtheorem{lemma}[theorem]{Lemma}
\newdefinition{rmk}{Remark}
\newproof{pf}{Proof}
\newproof{pot}{Proof of Theorem \ref{thm}}

\begin{document}
\let\WriteBookmarks\relax
\def\floatpagepagefraction{1}
\def\textpagefraction{.001}

\shorttitle{}    

\shortauthors{}  

\title [mode = title]{Schubert varieties for the super affine Grassmannian of $GL_{n|m}$}  

\tnotemark[1] 

\tnotetext[1]{} 

%

\author[1]{S. Gureva}

\cormark[1]

\fnmark[1]

\ead{gureva.sd@phystech.edu}


\credit{}

\affiliation[1]{organization={Moscow Institute of Physics and Technology},
            addressline={9 Institutskiy per.}, 
            city={Dolgoprudny},
            postcode={141701}, 
            state={Moscow Region},
            country={Russia}}

\author[1]{V. Stukopin}

\fnmark[2]

\ead{stukopin.va@mipt.ru}


\credit{}


\cortext[1]{Corresponding author}



\begin{abstract}
We study Schubert varieties of the affine Grassmannian for the general linear supergroup $GL_{n|m}$. An explicit computational study is conducted in low-dimensional cases, namely for dimensions $n|m = 1|1$ and $2|1$. We describe the supervariety structures that arise in these settings, providing coordinate descriptions, equations, and morphisms.
\end{abstract}

\begin{highlights}
\item study of the Affine Grassmannian of the general linear supergroup $Gr(GL_{n|m})$ in the supergeometric setting
\item general and explicit geometric description of the Schubert varieties for $n|m = 1|1$ and $2|1$
\item super-jet schemes appearance
\end{highlights}

\begin{keywords}
supergeometry \sep homogeneous superspaces \sep affine Grassmannian 
\end{keywords}

\maketitle
\tableofcontents
\section{Introduction}\label{intro}
The affine Grassmannian $Gr_G$ of an algebraic group $G$ is an infinite-dimensional analog of a partial flag variety. It is related to the representation theory through the geometric Satake correspondence, which was proved in \cite{mirkovic_2007_geometric}. The important component of this correspondence are the Schubert varieties \cite{zhu_2016_introduction} ( \cite{peerenboom2021affine} for more explicit description).

Research in the direction of Satake equivalence for supergroups is conducted in \cite{Braverman_2021}. Their work operates on the high level of categories and sheaves and discusses questions of Gaiotto cojectures and Langlands correspondence. We pursue more down to earth goals. We focus on the explicit coordinate description of super Schubert varieties in the case of the supergroup $GL_{n|m}$.

This paper explores the super-analog of the affine Grassmannian. More specifically, we construct and study the supervarieties that naturally emerge in this context. We start with a quick recollection of the basic definitions \label{prelim} and then formulate and proof our results in \ref{results}. In Theorem \ref{gen} we describe the general structure of a super Schubert variety in the case of the super affine Grassmannian of the group $GL_{n|m}$. In Theorem \ref{th11} we give a coordinate description in case of $GL_{1|1}$, and Theorems \ref{th21typ} and \ref{th21atyp} give a coordinate description of the Schubert varieties that correspond through Cartan decomposition \ref{Cartan} to the typical and atypical cocharacters. 

The explicit description of the Schubert varieties in dimension $2|1$ reveals the structure, which can be interpreted as the super-analog of the jet-scheme (see \cite{Mourtada2023}), which we believe has never before appeared in the literature.
 
\section{Preliminaries}\label{prelim}
General references on supergeometry can be found in \cite{koblitz2013gauge} and \cite{carmeli2011mathematical}.

A supercommutative ring $R = R_0\oplus R_1$ is a superring (i.e. $\mathbb{Z}_2$-graded ring) such that for any two homogeneous elements $a$ and $b$ we have: 
\begin{align}\label{supcomm}
\forall a, b \in R \ \ \ \ \  ab = (-1)^{|a||b|}ba.
\end{align}
Elements that belong to $R_0$ (resp. $R_1$) are called even (resp. odd). We assume that $R$ is also associative, with unit and over $\mathbb{C}$. We denote the category of all such superalgebras $(salg)$.

Recall the standard definition of the general linear supergroup $GL_{n|m}$, which is the functor that sends $R\in (salg)$ to the set of automorphisms of the supermodule $R^{n|m}$. We use the standard description of it as the set of invertible supermatrices of the form
$$GL_{n|m}(R) := \left\{ \left( \begin{array}{c|c} x & \xi \\ \hline \eta & y \end{array}\right) \ \Big| \begin{array}{c} x \in GL_n(R_0), \ y \in GL_m(R_0), \\ \xi \in Mat_{n\times m }(R_1), \ \eta \in Mat_{m\times n }(R_1) \end{array} \right\}.$$
Supermatrices are added or multiplied just as for ordinary matrices. The supermatrix is invertible (resp. diagonalizible) if and only if both even blocks $x$ and $y$ are invertible (resp. diagonalizible). 

The superdeterminant or Berezinian $\operatorname{Ber} : GL_{n|m}(R) \rightarrow R_0^*$ is computed by the formula:
$$\operatorname{Ber}\left( \begin{array}{c|c} x & \xi \\ \hline \eta & y \end{array}\right) = \frac{\operatorname{det}(x - \xi y^{-1} \eta)}{\operatorname{det}(y)}.$$
The Berezinian satisfies similar properties to the ordinary determinant. In particular, it is multiplicative. 
The inverse of a supermatrix $A \in GL_{n|m}(R)$ can be computed by the super Cramer rule (see \cite{Shemyakova_2022}):
\begin{align}\label{sup_cram}
(A^{-1})_{ij} = 
\begin{cases}
\dfrac{(\operatorname{adj} A)_{ji} } {\operatorname{Ber} A}, & j = 1, \dots , n \\ \dfrac{(\operatorname{adj} A)_{ji}}{\operatorname{Ber}^* A}, & j = \widehat{1}, \dots , \widehat{m}
\end{cases}, 
\qquad
(\operatorname{adj} A)_{ij} = 
\begin{cases} \operatorname{Ber} D_{ij}(A), & i = 1, \dots , n \\ \operatorname{Ber}^* D_{ij}(A), & i = \widehat{1}, \dots , \widehat{m} \end{cases}.
\end{align}
By $D_{ij}(A)$ is denoted the matrix $A$ with the $i$-th line replaced by the $j$-th basis vector.

Having that defined, we proceed with the formal definition of the affine Grassmannian for the supergroup $GL_{n|m}$. We start with the ring of formal power series and the Laurent series with coefficients in a supercommutative ring $R$ as $R\llbracket t \rrbracket = R_0\llbracket t \rrbracket\oplus R_1 \llbracket t \rrbracket$ and $R((t)) = R_0((t))\oplus R_1((t))$. 

The affine Grassmannian of the general linear supergroup is the functor from the category $(salg)$ to the category of sets:
\begin{align}
    Gr_{GL_{n|m}}: (salg)& \rightarrow (sets) \\
    R &\mapsto GL_{n|m}(R((t)))/GL_{n|m}(R\llbracket t \rrbracket), 
\end{align}
The factor is taken by the action with right multiplication. 
Our computations in theorems \ref{th11}, \ref{th21typ} and \ref{th21atyp} are true over an arbitrary supercommutative ring; however, in theorem \ref{Cartan}, we assume $\mathbb{C}$-points of the super affine Grassmannian. 

Denote by $GL_{n|m}\llbracket t \rrbracket$ the set of automorphisms of the supermodule $\mathbb{C}\llbracket t \rrbracket \otimes_{\mathbb{C}} \mathbb{C}^{n|m}$ (resp. $\mathbb{C}((t))\otimes_{\mathbb{C}} \mathbb{C}^{n|m}$). The supergroup $GL_{n|m}\llbracket t \rrbracket$ consists of the matrices with coefficients in superring 
$$\mathbb{C}[\xi_{\hat{1}}, \dots, \xi_{\hat{m}}]\otimes_{\mathbb{C}}\mathbb{C}\llbracket t \rrbracket = \widetilde{\mathbb{C}\llbracket t \rrbracket} \oplus \text{ odd series }, $$ 
where $\xi_{\hat{i}}$ -- grassmann variables, i.e. they anticommute $\xi_{\hat{i}}\xi_{\hat{j}}=-\xi_{\hat{j}}\xi_{\hat{i}}$ and $\widetilde{\mathbb{C}\llbracket t \rrbracket}$ contains all series $\mathbb{C}\llbracket t \rrbracket$ together with the nilpotent series (even products of the grassmann variables). 

Similarly, we use the notation $GL_{n|m}((t))$ for the supergroup of invertible matrices with coefficients in 
$$\mathbb{C}[\xi_{\hat{1}}, \dots, \xi_{\hat{m}}]\otimes_{\mathbb{C}}\mathbb{C}((t)) = \widetilde{\mathbb{C}((t))}\oplus \text{ odd Laurent series}.$$

We define the $\mathbb{C}$-points of the affine Grassmannian of the general linear supergroup as $$Gr_{GL_{n|m}}(\mathbb{C})=GL_{n|m}((t)) / GL_{n|m}\llbracket t \rrbracket.$$

Now consider the left action of $GL_{n|m}\llbracket t \rrbracket$ on $Gr_{GL_{n|m}}$. The following theorem is a generalization of the basic fact about classical affine Grassmannian, known as Cartan–Iwahori–Matsumoto decomposition. 
\begin{proposition}[Cartan–Iwahori–Matsumoto Decompostion]\label{Cartan}
There is a bijection
\begin{align*}
\mathcal{X}_{+}(n,m)& \overset{1:1}{\longleftrightarrow} \left\{ GL_{n|m}\llbracket t \rrbracket \text{-orbits of }  Gr_{GL_{n|m}}(\mathbb{C})\right\},\\
\lambda & \longleftrightarrow GL_{n|m}\llbracket t \rrbracket\cdot \left[ t^{\lambda}\right]
\end{align*}
where $\mathcal{X}_{+}(n,m) = \left\{ \lambda=(\lambda_1, \dots , \lambda_n, \lambda_{\hat{1}} , \dots , \lambda_{\hat{m}} ) \in \mathbb{Z}^n\times\mathbb{Z}^m | \lambda_1 \ge \dots \ge \lambda_n \text{ and } \lambda_{\hat{1}} \ge \dots \ge \lambda_{\hat{m}} \right\}$ is the lattice of positive cocharacters of $GL_{n|m}$ and $t^\lambda = \operatorname{diag}(t^{\lambda_1}, \dots, t^{\lambda_n},t^{\lambda_{\hat{1}}}, \dots, t^{\lambda_{\hat{m}}})$.
\end{proposition}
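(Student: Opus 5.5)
We reduce the statement to the classical Cartan decomposition for $GL_n \times GL_m$ by killing the odd blocks and the nilpotent part of the even blocks. Write $K = GL_{n|m}\llbracket t \rrbracket$ and $G = GL_{n|m}((t))$. We use that $K$ acts on the left and the quotient is by $K$ on the right, so the claim is that $K\backslash G/K$ is in bijection with $\mathcal{X}_{+}(n,m)$. The approach has two steps.

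\emph{Surjectivity} comes first. Take $g = \left(\begin{smallmatrix} x & \xi \\ \eta & y\end{smallmatrix}\right) \in G$, with $x,y$ invertible over $\widetilde{\mathbb{C}((t))}$.

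\begin{enumerate}
\item Factor $g$ by the super Gauss decomposition:
$$g = \begin{pmatrix} 1 & \xi y^{-1} \\ 0 & 1\end{pmatrix}\begin{pmatrix} x-\xi y^{-1}\eta & 0 \\ 0 & y\end{pmatrix}\begin{pmatrix} 1 & 0 \\ y^{-1}\eta & 1\end{pmatrix}.$$
The outer factors are unipotent with odd off-diagonal entries.
\item Bring the even block-diagonal part into standard form. Its reduced part (setting all $\xi_{\hat i}=0$) lies in $GL_n(\mathbb{C}((t)))\times GL_m(\mathbb{C}((t)))$. The classical Cartan decomposition for each factor gives $k_1 t^{\lambda} k_2$ with $\lambda\in\mathcal{X}_+(n,m)$. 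The correction by nilpotent even entries is handled by induction on the filtration by degree in the Grassmann variables. At each stage the remaining error is of the form $t^\lambda(1+N)$ with $N$ of higher Grassmann degree.
\item Absorb the unipotent factors and the error terms into $K$ on the left and right. For an odd unipotent $u=1+\theta$, write $t^\lambda u = (1+t^\lambda\theta t^{-\lambda})t^\lambda$. Then split each Laurent entry of $t^\lambda\theta t^{-\lambda}$ into its part in $t^{-1}\mathbb{C}[t^{-1}]$ and its part in $\mathbb{C}\llbracket t\rrbracket$.
\end{enumerate}

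The key point is that a Laurent series $f$ can be written as $a+b$ with $a$ having only negative powers. Given the entry $(i,j)$ carrying the factor $t^{\lambda_i-\lambda_j}$, the non-integral part of $\theta_{ij}$ can be pushed to the left or to the right factor depending on the sign of $\lambda_i-\lambda_j$, exactly as in the classical proof via row and column operations. Each push lowers the Grassmann degree of what remains because products of odd elements are nilpotent, so the process terminates after at most $m+1$ steps.

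\emph{Injectivity.} Suppose $t^\lambda \in K t^\mu K$. Setting all Grassmann variables to zero is a group homomorphism $G\to GL_n(\mathbb{C}((t)))\times GL_m(\mathbb{C}((t)))$ that maps $K$ onto $GL_n\llbracket t\rrbracket\times GL_m\llbracket t\rrbracket$ and fixes $t^\lambda$ and $t^\mu$. The classical uniqueness in Cartan's decomposition, for example via the invariants given by the minimal $t$-valuations of $k\times k$ minors, then gives $\lambda=\mu$ on each factor.

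\textbf{Main obstacle.} The main obstacle is step 3, the absorption of odd off-diagonal entries. There the entries of $t^\lambda\theta t^{-\lambda}$ carry factors $t^{\lambda_i-\lambda_{\hat j}}$ mixing even and odd indices, and these have no ordering between them. The first thing I would try is to treat each odd entry separately: use a left multiplication when the power is negative and a right one otherwise, and check that the commutators created are even and nilpotent, so they feed back into step 2 at higher Grassmann degree.
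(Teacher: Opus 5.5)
Your injectivity argument is correct, but the surjectivity half has a gap that cannot be closed. In step 3, and likewise in the nilpotent correction of step 2, you assume that every nilpotent entry can be absorbed into $K$ on one side or the other. Take a single odd entry $\theta_{i\hat j}$ of $u=1+\theta$ sitting to the right of $t^\lambda$. Writing $\theta_{i\hat j}=t^{\lambda_{\hat j}-\lambda_i}\theta'+\theta''$ with $\theta',\theta''$ integral is possible exactly when $\operatorname{val}_t\theta_{i\hat j}\ge\min(0,\lambda_{\hat j}-\lambda_i)$. Nothing in steps 1--2 bounds the pole order of the odd entries, or of the even nilpotent corrections, in terms of $\lambda$, which is determined by the reduced matrix alone. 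In the classical Smith-form argument every entry is controlled by a pivot of minimal valuation. A nilpotent entry can never be a pivot, so its valuation is unconstrained, and the step ``exactly as in the classical proof'' does not transfer.

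In fact the gap is fatal for the statement as the paper defines $Gr_{GL_{n|m}}(\mathbb{C})$, namely with coefficients in $\mathbb{C}[\xi_{\hat 1},\dots,\xi_{\hat m}]\otimes\mathbb{C}((t))$. Take $n=m=1$ and
\[
g=\begin{pmatrix} 1 & t^{-1}\xi_{\hat 1}\\ 0 & 1\end{pmatrix}\in GL_{1|1}((t)),\qquad g\notin K=GL_{1|1}\llbracket t \rrbracket .
\]
Suppose $g=k_1t^\lambda k_2$ with $k_1,k_2\in K$. Your own reduction homomorphism gives $1=\bar k_1 t^\lambda \bar k_2$, so $t^\lambda$ lies in the reduced $K$ and hence $\lambda=0$. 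Then $g=k_1k_2\in K$, which is false because $t^{-1}\xi_{\hat 1}$ is not a power series. So the orbit $K\cdot[g]$ contains no $[t^\lambda]$, and the correspondence is not surjective. Similarly, $\operatorname{diag}(1+t^{-1}\xi_{\hat 1}\xi_{\hat 2},1,1)\in GL_{1|2}((t))$ defeats the induction in step 2.

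What your reduction argument does establish is injectivity, together with the full bijection on genuinely reduced points (all Grassmann variables set to $0$), where the statement is just the classical Cartan decomposition for $GL_n\times GL_m$. With nilpotent coefficients the orbits $Gr^\lambda$ only stratify the underlying reduced space: a point whose reduction lies in a stratum need not lie in that stratum. The paper's one-line sketch (Smith normal form plus invertibility of the Berezinian) does not address this either. The proposition therefore has to be restated, and no refinement of your step 3 can prove it as written.
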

In a classical setting, it follows from the Smith normal form. In super setting one should slightly modify the proof, using dioganalisation criterion for the supermatrix together with the invertability of the Berezinian. 

The main objects of interest of this paper are Schubert varieties $Gr^{\lambda}$. By the orbit-stabilizer theorem, we have a bijection:
\begin{align*}
Gr^{\lambda} = GL_{n|m}\llbracket t \rrbracket  / Stab_{GL_{n|m}\llbracket t \rrbracket }([t^{\lambda}])
\end{align*}
The following description of the stabilizer $Stab_{GL_{n|m}\llbracket t \rrbracket }([t^{\lambda}])$:
\begin{align}
Stab_{GL_{n|m}\llbracket t \rrbracket}([t^{\lambda}]) &= \Bigl\{ A \in GL_{n|m}\llbracket t \rrbracket \ | \ [At^{\lambda}] = [t^{\lambda}]\Bigr\} = \Bigl\{ A \in GL_{n|m}\llbracket t \rrbracket \ | \ \exists B \in GL_{n|m}\llbracket t \rrbracket \text{ s.t. } At^{\lambda} = t^{\lambda}B\Bigr\} \notag \\
&= \Bigl\{ A \in GL_{n|m}\llbracket t \rrbracket \ | \ \exists B \in GL_{n|m}\llbracket t \rrbracket \text{ s.t. } A = t^{\lambda}Bt^{-\lambda}\Bigr\} = GL_{n|m}\llbracket t \rrbracket \cap t^{\lambda} GL_{n|m}\llbracket t \rrbracket t^{-\lambda}. \label{stab_t_lambda}
\end{align}
Thus, we want to study spaces of cosets of the form:
\begin{align}\label{mod_stab_t_lambda}
Gr^{\lambda} = GL_{n|m}\llbracket t \rrbracket  / GL_{n|m}\llbracket t \rrbracket \cap t^{\lambda} GL_{n|m}\llbracket t \rrbracket t^{-\lambda}.
\end{align}
and endow them with the structure of supervarieties.
\section{Results}\label{results}
\subsection{General description of Schubert varieties}
The following theorem draws the general picture of the Schubert varieties. 
\begin{theorem}[General Schubert variety]\label{gen}
    Let $G = GL_{n|m}$ and $\lambda = (\lambda_1, \dots , \lambda_n| \lambda_{\hat{1}}, \dots, \lambda_{\hat{m}})$ be the positive cocharacter corresponding to the Schubert variety $Gr^\lambda$ via \ref{Cartan}. Then $Gr^\lambda$ is a super affine bundle over superflag variety $G/P_\lambda$ with fiber $ev_0^{-1}(P_{\lambda})/t^\lambda G\llbracket t \rrbracket t^{-\lambda} \cap G\llbracket t \rrbracket$, where $ev_0: G\llbracket t \rrbracket \rightarrow G$ is evaluation at zero. \\
    It follows that it is the smooth supervariety of super dimension 
    $$\sum_{\lambda_i > \lambda_j}(\lambda_i-\lambda_j) + \sum_{\lambda_{\hat{i}}>\lambda_{\hat{j}}}(\lambda_{\hat{i}}-\lambda_{\hat{j}}) \bigg\vert \sum_{\lambda_{\hat{i}}>\lambda_{j}}(\lambda_{\hat{i}}-\lambda_{j}) + \sum_{\lambda_{\hat{j}}>\lambda_{\hat{i}}}(\lambda_{\hat{j}}-\lambda_{i}).$$
\end{theorem}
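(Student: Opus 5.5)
We will prove the statement by factoring the quotient $Gr^\lambda = G\llbracket t \rrbracket / K_\lambda$, where $K_\lambda := G\llbracket t \rrbracket \cap t^{\lambda} G\llbracket t \rrbracket t^{-\lambda}$ is the stabilizer computed in \eqref{stab_t_lambda}, through the evaluation map $ev_0: G\llbracket t \rrbracket \to G$. The first step is to compute $ev_0(K_\lambda)$ entrywise. Conjugating $B$ by $t^\lambda$ multiplies the $(i,j)$ entry by $t^{\lambda_i-\lambda_j}$; here $i,j$ range over even and odd indices alike, and $\lambda_{\hat i}$ is used for odd indices. Hence $A\in K_\lambda$ exactly when $A$ is invertible over $\llbracket t\rrbracket$ and $A_{ij}\in t^{\max(0,\lambda_i-\lambda_j)}\,\llbracket t\rrbracket$ for every entry (of the appropriate parity). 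Evaluating at $t=0$ kills every entry with $\lambda_i>\lambda_j$. Therefore $P_\lambda := ev_0(K_\lambda)$ is the parabolic subsupergroup of supermatrices whose $(i,j)$ entry vanishes whenever $\lambda_i>\lambda_j$. This is a block "lower-triangular" parabolic in $GL_{n|m}$ with respect to the ordering of the merged set of $\lambda_i$ and $\lambda_{\hat j}$. In particular $K_\lambda \subset ev_0^{-1}(P_\lambda)$.

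The second step produces the fibration. The composite $G\llbracket t \rrbracket \to G \to G/P_\lambda$ is constant on right $K_\lambda$-cosets, so it descends to a map $\pi: Gr^\lambda \to G/P_\lambda$. Since $ev_0$ is surjective (constant loops), $\pi$ is $G\llbracket t\rrbracket$-equivariant and surjective. The fibre over the base point is $ev_0^{-1}(P_\lambda)/K_\lambda$, which is the claimed fibre. To get local triviality we use the standard big-cell charts of the superflag variety $G/P_\lambda$: on each translate $gU^-$ of the opposite unipotent cell we have a section $gU^-\hookrightarrow G \hookrightarrow G\llbracket t\rrbracket$ by constant loops. This section trivializes $\pi$ as $gU^-\times ev_0^{-1}(P_\lambda)/K_\lambda$.

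The third step shows that the fibre is a super affine space and counts dimensions. The kernel $G^{(1)} = \ker ev_0$ is pro-unipotent, and $ev_0^{-1}(P_\lambda) = P_\lambda\cdot G^{(1)}$. Hence the fibre is identified with $G^{(1)}/(G^{(1)}\cap K_\lambda)$; here we use $K_\lambda = (P_\lambda\text{-part})\cdot(G^{(1)}\cap K_\lambda)$, where the $P_\lambda$ part is realised by constant loops, which are contained in $K_\lambda$. Now $G^{(1)}\cap K_\lambda$ is defined by the entrywise conditions $A_{ij}\in t^{\max(1,\lambda_i-\lambda_j)}$. Using the exponential, or the factorisation of unipotent supermatrices into ordered products of entry subgroups, the quotient is parametrised by the coefficients of $t^1,\dots,t^{\lambda_i-\lambda_j-1}$ in the entries with $\lambda_i>\lambda_j$. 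This gives an affine superspace of dimension $\sum_{\lambda_i>\lambda_j}(\lambda_i-\lambda_j-1)$, split by the parity of the entry. The base $G/P_\lambda$ is smooth and has one coordinate for each pair with $\lambda_i>\lambda_j$. Adding fibre and base dimensions gives $\sum_{\lambda_i>\lambda_j}(\lambda_i-\lambda_j)$, separated into even pairs (even–even and odd–odd indices) and odd pairs (mixed indices). This is the stated superdimension, and smoothness follows since the total space is locally a product of smooth pieces.

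The main obstacle is making the third step rigorous in the super setting. We need a unique-factorisation (Iwahori-type) decomposition $G^{(1)} = U_\lambda\cdot(G^{(1)}\cap K_\lambda)$ with $U_\lambda$ a finite-dimensional affine superspace, and this must hold functorially over arbitrary superalgebras $R$, where odd nilpotents prevent a naive Gauss elimination. We would first try ordering the root subgroups by the value of $\lambda_i-\lambda_j$ and peeling off the low-order $t$-coefficients inductively. Invertibility of the even blocks, as guaranteed by the Berezinian criterion, is what makes each elimination step legitimate.
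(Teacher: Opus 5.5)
Your proposal is correct and follows the same route as the paper, which adapts the classical $GL_n$ argument it cites: the map $Gr^\lambda\to G/P_\lambda$ induced by $ev_0$, with fibre $ev_0^{-1}(P_\lambda)/K_\lambda$. Your constant-loop big-cell sections and the fibre count $\sum_{\lambda_i>\lambda_j}(\lambda_i-\lambda_j-1)$ supply details the paper leaves implicit, and your parity bookkeeping (mixed pairs odd) is the intended reading of the stated formula, whose last odd sum is misprinted and should run over $\lambda_i>\lambda_{\hat j}$.

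The step you flag as the main obstacle is harmless. Writing an element of $G^{(1)}$ as a unipotent factor supported on the entries with $\lambda_i>\lambda_j$ times a factor of $P_\lambda$-shape only inverts blocks congruent to $1$ mod $t$. After that, the truncated coefficients are fixed by a triangular recursion over the $\lambda$-levels that uses no division, so the factorization holds functorially over any superalgebra $R$.
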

\begin{pf}
    The closed subgroup $P_{\lambda}\subset GL_{n|m}$ is the subgroup of matrices with zeroes on the entries $ij$ for which $\lambda_i > \lambda_j$ (where $i$ and $j$ can be both odd and even positions). The elementary proof for the similar result in the case of $GL_{n}$ is given in Theorem 2.3.15 \cite{peerenboom2021affine}. This proof relies on the fact that for a Lie group $G$ and its closed subgroup $H$ the natural map $G \rightarrow G/H$ is a fiber bundle. The subtleties of the super case can be found in works on quotients in supergeometry \cite{balduzzi2009quotients} and \cite{fioresi2007minkowski}. We also reference \cite{varadarajan2004supersymmetry} for the local structure of submersions.
\end{pf}

In the subsequent subsections, we will conduct more detailed analysis of Schubert varieties. We focus on the cases of $GL_{1|1}$ and $GL_{2|1}$. We will conduct our computations by the following scheme: (1) we obtain the full system of representatives, i.e. set of matrices, s.t. for any two elements $A$ and $B$ from this set holds equality of equivalence classes $[A] = [B]$ if and only if $A = B$ (2) check that for any two elements $A$ and $B$ from the system, the element $AB^{-1}$ belongs to the stabilizer $GL_{n|m}\llbracket t \rrbracket \cap t^{\lambda} GL_{n|m}\llbracket t \rrbracket t^{-\lambda}$ if and only if $A = B$.
\subsection{Schubert varieties for $n|m = 1|1$}\label{nm11}
Let us begin with a brief remark about the notation that we use throughout our computations. We denote by Latin letters even matrix elements and by Greek letters odd matrix elements. If we denote a super matrix by some capital Latin letter, then its even blocks are denoted by the same lowercase letter. For example, the matrix denoted by $X$ will have blocks $x$ and $x'$ on the diagonal. 

The following two theorems give an explicit coordinate description of Schubert varieties in dimension $2|1$.
\begin{theorem}\label{th11}
For the affine Grassmannian of the supergroup $GL_{1|1}$ the Schubert varieties with cocharacter $\lambda=(k|0)$ (or $\lambda = (0|k)$), $k>0$ are parametrized by the points in the super projective space, i.e. 
$$Gr^{(0|k)}_{GL_{1|1}} \cong Gr^{(k|0)}_{GL_{1|1}} \cong \mathbb{P}^{(0|k)}.$$
\end{theorem}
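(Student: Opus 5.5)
The plan is to compute the stabilizer \eqref{stab_t_lambda} explicitly for $\lambda=(k|0)$, to write down a full system of representatives for the cosets \eqref{mod_stab_t_lambda} following the two-step scheme described after Theorem \ref{gen}, and then to recognise the resulting functor of points as $\mathbb{P}^{(0|k)}$. The case $\lambda=(0|k)$ is then obtained by the symmetric argument, exchanging the roles of the off-diagonal blocks.

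\textbf{Step 1: the stabilizer.} Write $A=\left(\begin{array}{c|c} a & \xi \\ \hline \eta & a'\end{array}\right)\in GL_{1|1}\llbracket t\rrbracket$ with $t^\lambda=\operatorname{diag}(t^k,1)$. Conjugation gives
$$t^\lambda B t^{-\lambda}=\left(\begin{array}{c|c} b & t^k\beta \\ \hline t^{-k}\gamma & b'\end{array}\right).$$
Hence $A\in GL_{1|1}\llbracket t\rrbracket\cap t^\lambda GL_{1|1}\llbracket t\rrbracket t^{-\lambda}$ if and only if $\xi\in t^kR_1\llbracket t\rrbracket$. There is no condition on $\eta$, since $t^{-k}\gamma$ ranges over all of $R_1\llbracket t\rrbracket$ once $\gamma\in t^kR_1\llbracket t\rrbracket$. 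Call this group $S_k$.

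\textbf{Step 2: representatives.} I claim that every coset $A\,S_k$ contains a unique element of the form
$$U_{\theta}=\left(\begin{array}{c|c} 1 & \theta(t) \\ \hline 0 & 1\end{array}\right),\qquad \theta(t)=\theta_0+\theta_1t+\dots+\theta_{k-1}t^{k-1},\quad \theta_i\in R_1.$$

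For existence, solve $A=U_\theta B$ with $B=\left(\begin{array}{c|c} b & \beta \\ \hline \gamma & b'\end{array}\right)\in S_k$. Since
$$U_\theta B=\left(\begin{array}{c|c} b+\theta\gamma & \beta+\theta b' \\ \hline \gamma & b'\end{array}\right),$$
we are forced to take $\gamma=\eta$, $b'=a'$ and $b=a-\theta\eta$. The element $b$ is invertible because $a$ is invertible and $\theta\eta$ is nilpotent. It remains to take $\beta=\xi-\theta a'$ and to require $\beta\equiv 0 \bmod t^k$. This is achieved by letting $\theta$ be the truncation of $\xi\,(a')^{-1}$ modulo $t^k$, which is possible because $a'$ is invertible in $R_0\llbracket t\rrbracket$.

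For uniqueness, $U_{\theta_1}^{-1}U_{\theta_2}=U_{\theta_2-\theta_1}$, and this lies in $S_k$ if and only if $\theta_2\equiv\theta_1 \bmod t^k$. Since both are truncated, this means $\theta_1=\theta_2$.

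Both steps use only supercommutativity and invertibility of the even diagonal entries, so they hold over an arbitrary $R\in(salg)$. We therefore get a natural bijection $Gr^{(k|0)}(R)\cong R_1^{\,k}$, functorial in $R$.

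\textbf{Step 3: identification and the main obstacle.} The functor $R\mapsto R_1^{k}$ is the functor of points of the purely odd superspace $\mathbb{A}^{0|k}$. For even dimension $0$, the super projective space $\mathbb{P}^{(0|k)}$ has underlying space a single point and is covered by a single chart, namely the standard chart with odd coordinates $\theta_0,\dots,\theta_{k-1}$. Hence $\mathbb{P}^{(0|k)}\cong\mathbb{A}^{0|k}$. As a consistency check, this agrees with the dimension formula of Theorem \ref{gen}: the only contributing pair is $\lambda_1=k>\lambda_{\hat 1}=0$, which gives superdimension $0|k$.

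I expect the main obstacle to be this final step. One has to check that the bijection is an isomorphism of supervarieties and not only of sets. Concretely, the projection $GL_{1|1}\llbracket t\rrbracket\to Gr^{\lambda}$, $A\mapsto\theta=\xi(a')^{-1}\bmod t^k$, should be a morphism compatible with the quotient structure of \cite{balduzzi2009quotients}. I would verify this through the explicit section $\theta\mapsto U_\theta$.

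For $\lambda=(0|k)$ the same computation applies with the roles of the blocks swapped. The stabilizer becomes the condition $\eta\equiv0 \bmod t^k$, the representatives are the lower unitriangular matrices $\left(\begin{array}{c|c}1&0\\ \hline \theta(t)&1\end{array}\right)$ with $\theta$ truncated modulo $t^k$, and again $Gr^{(0|k)}\cong\mathbb{P}^{(0|k)}$.
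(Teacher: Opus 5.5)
Your proposal is correct and follows essentially the same route as the paper. You reduce each point to a representative $\left(\begin{array}{c|c} 1 & \theta \\ \hline 0 & 1\end{array}\right)$ with $\theta\equiv \xi (a')^{-1} \bmod t^k$ and $\deg_t\theta<k$, check uniqueness through the stabilizer condition $\xi\in t^kR_1\llbracket t\rrbracket$ (the paper's computation of $B^{-1}A$), and identify the $k$ odd coefficients with $\mathbb{P}^{(0|k)}$, which is exactly the paper's map $X\cdot[t^\lambda]\mapsto([\kappa/x']_0:\dots:[\kappa/x']_{k-1})$. Solving $A=U_\theta B$ directly instead of the paper's chain of right multiplications, and stating the identification $\mathbb{P}^{0|k}\cong\mathbb{A}^{0|k}$ explicitly, are only cosmetic differences.
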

\begin{pf}
Let $\lambda = (k|0)$, $k>0$. First, we notice that for an element of the orbit $Gr^\lambda$ we have:
\begin{align*}
X\cdot [t^{\lambda}] =\left(\begin{array}{c|c} x & \kappa  \\ \hline \kappa' & x' \end{array}\right)\cdot\Biggl[\left(\begin{array}{c|c} t^k & 0  \\ \hline 0 & 1 \end{array}\right)\Biggr] &= \Biggl[\left(\begin{array}{c|c} xt^k & \kappa  \\ \hline \kappa' t^k & x' \end{array}\right)\cdot\left(\begin{array}{c|c} x' & 0  \\ \hline -\kappa't^k & 1 \end{array}\right)\Biggr] \\
&=\Biggl[\left(\begin{array}{c|c} (xx' -\kappa\kappa')t^k & \kappa  \\ \hline 0 & x' \end{array}\right)\cdot\left(\begin{array}{c|c} (xx' -\kappa\kappa')^{-1} & 0  \\ \hline 0 & x'^{-1} \end{array}\right)\Biggr] \\
yt^k + \dfrac{\kappa}{x'} = \widetilde{\kappa} \mod t^k \qquad &=\Biggl[\left(\begin{array}{c|c} t^k &  \dfrac{\kappa}{x'}  \\ \hline 0 & 1 \end{array}\right)\cdot\left(\begin{array}{c|c} 1& y \\ \hline 0 & 1 \end{array}\right)\Biggr] = \Biggl[\left(\begin{array}{c|c} t^k &  \widetilde{\kappa} \\ \hline 0 & 1 \end{array}\right) \Biggr] \quad  \\
&= \left(\begin{array}{c|c} 1 &  \widetilde{\kappa} \\ \hline 0 & 1 \end{array}\right) \Biggl[\left(\begin{array}{c|c} t^k &  0 \\ \hline 0 & 1 \end{array}\right) \Biggr], \text{ where } \operatorname{deg}_t \widetilde{\kappa} < k.
\end{align*}
 All the matrices used are from $GL_{1|1}\llbracket t \rrbracket$, since $x$ and $x'$ are invertible elements of $R_0$ by invertibility criterion for supermatricies. 
 First, we choose two representatives $A, B \in GL_{1|1}\llbracket t \rrbracket$ of the form obtained above, i.e. $A = \left(\begin{array}{c|c} 
 1 & \widetilde{\alpha}  \\ \hline  0 & 1\end{array}\right)$ and $B = \left(
\begin{array}{c|c} 
 1 & \widetilde{\beta}  \\ \hline 
0 & 1
\end{array}\right)$.
 Then we use \eqref{sup_cram} to invert $B$ confirm that the matricies above indeed form the full system of representatives. Recall the description of the stabilizer \eqref{stab_t_lambda}. Now we write down the condition \eqref{mod_stab_t_lambda}, which says that $A$ and $B$ belong to the same class modulo stabilizer if and only if exists $C \in GL_{1|1}\llbracket t \rrbracket$ such that:
\begin{align*}
\left(
\begin{array}{c|c} 
 c & \gamma t^{k}  \\ \hline  
 \gamma' t^{ -k} & c'
\end{array}\right)
&=
\left(
\begin{array}{c|c} 
1 & - \widetilde{\beta} \\ \hline 
0 & 1 \end{array}
\right)
\cdot 
\left(
\begin{array}{c|c} 
 1 & \widetilde{\alpha}  \\ \hline  
0 & 1
\end{array}\right) = \left(
\begin{array}{c|c} 
 1 & \widetilde{\alpha} -  \widetilde{\beta}  \\ \hline  
0 & 1
\end{array}\right) \Longleftrightarrow \widetilde{\alpha} =  \widetilde{\beta}.
\end{align*}
Diagonal invertible matrices commute with $t^\lambda$. Thus, the first column is defined up to an invertible scalar from $R_0\llbracket t \rrbracket$ and we can identify it with the point in the odd super projective space. The isomorphism is given by
$$Gr^{\lambda}\ni X\cdot [t^{\lambda}] = \left(\begin{array}{c|c} x & \kappa  \\ \hline \kappa ' & x' \end{array} \right)\cdot \left[\left(\begin{array}{c|c} t^k & 0 \\ \hline 0 & 1 \end{array} \right)\right] \longmapsto (\left[ \kappa  / x' \right]_{0} : \dots : \left[ \kappa  / x ' \right]_{k-1}) \in \mathbb{P}^{(0|k)}(R_0\llbracket t \rrbracket ^*),$$
where by $[-]_i$ we denote the $i$-th coefficient of the series.\\
The case $\lambda = (0|k)$ is analogous.
\end{pf}
\subsection{Schubert varieties for $n|m = 2|1$}\label{nm21}
In the current subsection $G$ stands for $GL_{2|1}$ and $Gr$ denotes $Gr_{GL_{2|1}}(\mathbb{C})$. 

Here we perform similar computations, as in the previous subsection. We write down the relation \eqref{mod_stab_t_lambda} with matrices $A$, $B$ and $C$ of the form $A =\left( \begin{array}{c|c}a&\alpha \\ \hline \alpha' & a' \end{array}\right)$, where $a = \left( \begin{array}{cc}a_{11}&a_{12}\\ a_{21} & a_{22} \end{array}\right) \in GL_2\llbracket t\rrbracket$, $a' \in R_0\llbracket t\rrbracket^*$ and $\alpha = (\alpha_1, \alpha_2)$, $\alpha' = (\alpha_1', \alpha_2')$ with $\alpha_i$ and $\alpha_i'$, $i = 1, 2$ in $R_1\llbracket t \rrbracket$, i.e. anticommuting series. 

We consider positive super-weights of the form $\lambda = (k, l | 0)$. In dimension $2|1$ there are two principle cases for a dominant super-weight $\lambda$ — typical and atypical. Typical weights are those with $k\ge l\ge 0$, while atypical are on the opposite those with $k>0>l$. Given an orbit point $A\cdot [t^{\lambda}] \in Gr^\lambda\subset Gr$, which maps to the $[A]\in G\llbracket t \rrbracket / t^\lambda G\llbracket t \rrbracket t^{-\lambda} \cap G\llbracket t \rrbracket$ we can perform a transformation that delivers the full system of representatives.

We begin with the lemma, which describes the transformation, which is applicable for both the typical and the atypical cases.
\begin{lemma}[Point normalization]\label{point_norm}
Let $A=\left( \begin{array}{c|c}a&\alpha \\ \hline \alpha' & a' \end{array}\right) \in G\llbracket t \rrbracket$ be a matrix as above, then
\begin{itemize}
\item[(1)] for $A$ exists a matrix of the form $\widetilde{A} = \left( \begin{array}{c|c}\widetilde{a}&\widetilde{\alpha} \\ \hline 0 \ 0& 1 \end{array}\right) \in G\llbracket t \rrbracket$, s.t. $A[t^\lambda] = \widetilde{A}[t^\lambda]$ inside $Gr$, if the weight $\lambda = (k, l|0)$, $k\ge l \ge 0$ is typical.
\item[(2)] for $A$ exists a matrix of the form $\widetilde{A} = \left( \begin{array}{c|c}\widetilde{a}&\widetilde{\alpha} \\ \hline 0 \ \widetilde{\alpha'}& 1 \end{array}\right) \in G\llbracket t \rrbracket$, s.t. $A[t^\lambda] = \widetilde{A}[t^\lambda]$ inside $Gr$, if the weight $\lambda = (k, l|0)$, $k>0>l$ is atypical.
\end{itemize}
\end{lemma}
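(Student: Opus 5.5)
Write $A[t^\lambda]$ for the coset $At^\lambda G\llbracket t \rrbracket$. By \eqref{stab_t_lambda}, $A[t^\lambda]=AC[t^\lambda]$ for every $C$ in the stabilizer $G\llbracket t\rrbracket\cap t^\lambda G\llbracket t\rrbracket t^{-\lambda}$. So the plan is to find such a $C$ with $\widetilde{A}:=AC$ of the required shape. This is the same idea as the column operations in the proof of Theorem \ref{th11}, now organised as right multiplication by a stabilizer element.

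\textbf{The stabilizer condition.} First I make the stabilizer explicit. Index rows and columns by $1,2,\hat 1$, with $\lambda_{\hat1}=0$. For $B\in G\llbracket t\rrbracket$ we have $(t^\lambda B t^{-\lambda})_{ij}=t^{\lambda_i-\lambda_j}b_{ij}$. Hence $C\in G\llbracket t\rrbracket$ lies in the stabilizer exactly when each entry $c_{ij}$ is divisible by $t^{\lambda_i-\lambda_j}$ whenever $\lambda_i>\lambda_j$, with no condition otherwise. All our computations are with entries in $R\llbracket t\rrbracket$, and the parity of $c_{ij}$ is forced by the block position. Only the bottom row $(c_{\hat11},c_{\hat12},c_{\hat1\hat1})$ will be nontrivial, so the relevant conditions are:
\begin{itemize}
\item $c_{\hat1 1}$ needs divisibility by $t^{0-k}$ only if $0>k$;
\item $c_{\hat1 2}$ needs divisibility by $t^{-l}$ only if $0>l$.
\end{itemize}

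\textbf{The candidate $C$.} I take
$$C=\left(\begin{array}{cc|c}1&0&0\\0&1&0\\ \hline u_1&u_2&u_3\end{array}\right),\qquad u_1,u_2\in R_1\llbracket t\rrbracket,\ u_3\in R_0\llbracket t\rrbracket^*.$$
Its even blocks are the identity and $u_3$, so $C$ is invertible by the invertibility criterion. Its only off-diagonal nonzero entries sit in the bottom row. The bottom row of $AC$ is
$$(\alpha_1'+a'u_1,\ \alpha_2'+a'u_2,\ a'u_3).$$
Since $a'$ is even and invertible (it is an even diagonal block of $A$), there are no sign issues from the odd entries.

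\textbf{Typical case $k\ge l\ge 0$.} Here $0\le \lambda_1,\lambda_2$, so the bottom row of $C$ is unconstrained. I choose
$$u_3=a'^{-1},\qquad u_i=-a'^{-1}\alpha_i' \quad (i=1,2).$$
This gives bottom row $(0,0,1)$. The top blocks become $\widetilde a=a+\alpha u$ and $\widetilde\alpha=\alpha u_3$, and $AC\in G\llbracket t\rrbracket$ as a product of invertible matrices. This proves (1).

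\textbf{Atypical case $k>0>l$.} The entry $c_{\hat12}$ must now be divisible by $t^{-l}$, while $c_{\hat11}$ is still free because $0<k$. I take
$$u_3=a'^{-1},\qquad u_1=-a'^{-1}\alpha_1',\qquad u_2=0.$$
The bottom row becomes $(0,\alpha_2',1)$, which is the shape claimed in (2) with $\widetilde{\alpha'}=\alpha_2'$. One could further take $u_2\in t^{-l}R_1\llbracket t\rrbracket$ to reduce $\alpha_2'$ modulo $t^{-l}$, but the lemma does not require this.

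\textbf{Expected obstacle.} The only delicate point is bookkeeping. I must check the divisibility condition for every entry of $C$ against the sign of $\lambda_i-\lambda_j$. The zero entries, such as $c_{12}$, which would need divisibility by $t^{k-l}$, trivially satisfy their conditions. The nonzero off-diagonal entries are confined to the bottom row, where the conditions were handled case by case above. I must also track parity so that $C$ genuinely lies in $G\llbracket t\rrbracket$.
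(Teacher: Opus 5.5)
Your proof is correct and is essentially the paper's own argument. The paper right-multiplies $At^\lambda$ by $t^{-\lambda}Ct^{\lambda}\in G\llbracket t\rrbracket$, with bottom row $(-\alpha_1't^k/a',\,-\alpha_2't^l/a',\,1/a')$, and in the atypical case it replaces the second column of that matrix by $e_2$, which is exactly your choice $u_2=0$ (your explicit stabilizer divisibility check just makes this step more transparent).
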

\begin{pf}
\begin{itemize}
\item[(1)] We will transform the general matrix via multiplication on the right by matrices from $G\llbracket t \rrbracket$, which move the point $A[t^\lambda]$ within the orbit $Gr^\lambda$. The transformation goes as follows:
\begin{align*}
A[t^\lambda] = \left[\left( 
 \arraycolsep=2pt\def\arraystretch{1.5}
 \begin{array}{cc|c}
a_{11}&a_{12}&\alpha_1  \\ 
a_{21}&a_{22}&\alpha_2  \\ \hline
\alpha_1' &\alpha_2'&a'
\end{array}
\right)\cdot
\left( 
 \arraycolsep=2pt\def\arraystretch{1.5}
 \begin{array}{cc|c}
t^k&0&0  \\ 
0&t^l&0  \\ \hline
0&0&1
\end{array}
\right)
 \right] &= 
\left[\left( 
 \arraycolsep=5pt\def\arraystretch{2.2}
 \begin{array}{cc|c}
a_{11}t^{k}&a_{12}t^{l}&\alpha_1  \\ 
a_{21}t^{k}&a_{22}t^{l}&\alpha_2  \\ \hline
\alpha_1' t^{k}&\alpha_2't^{l}&a'
\end{array}
\right) \cdot
\left( 
 \arraycolsep=2pt\def\arraystretch{1.5}
 \begin{array}{cc|c}
1&0&0  \\ 
0&1&0  \\ \hline
-\dfrac{\alpha_1't^{k}}{a'}&-\dfrac{\alpha_2't^{l}}{a'}&1/a'
\end{array}
\right)
\right] \\
&= \left[\left( 
 \arraycolsep=2pt\def\arraystretch{2}
 \begin{array}{cc|c}
(a_{11}-\dfrac{\alpha_1\alpha_1'}{a'})t^{k}&(a_{12}-\dfrac{\alpha_1\alpha_2'}{a'})t^{l}&\alpha_1/a'  \\ 
(a_{21}-\dfrac{\alpha_2\alpha_1'}{a'})t^{k}&(a_{22}-\dfrac{\alpha_2\alpha_2'}{a'})t^{l}&\alpha_2/a' \\ \hline
0&0&1
\end{array}
\right)
\right]\\
&=
\left[\left( 
 \arraycolsep=2pt\def\arraystretch{1.5}
 \begin{array}{cc|c}
\widetilde{a}_{11}&\widetilde{a}_{12}&\widetilde{\alpha}_1 \\ 
\widetilde{a}_{21}&\widetilde{a}_{22}&\widetilde{\alpha}_2  \\ \hline
0&0&1
\end{array}
\right)
\left( 
 \arraycolsep=2pt\def\arraystretch{1.5}
 \begin{array}{cc|c}
t^{k}&0&0  \\ 
0&t^l&0  \\ \hline
0&0&1
\end{array}
\right)
\right] = \widetilde{A}[t^\lambda].
\end{align*}
And we get the desired form of $\widetilde{A}$.
\item[(2)] Since in atypical case the transformation matrix from $(1)$ may not belong to $G\llbracket t \rrbracket$, one can replace the second column in it with the even basis vector $e_2 = (0, 1|0)$ and the desired result follows.
\end{itemize}
\end{pf}
\begin{theorem}[Typical Schubert variety]\label{th21typ}
    For a typical super-weight $\lambda = (k, l|0)$ with $k>l>0$ orbit $Gr^\lambda$ is parametrized by the space of sections of a super-scheme $\mathcal{P}_{k, l}$, consisting of the even-projective line, multiplied by a point $\mathbb{P}^{1|0}$ with $\mathcal{P}_{k, l}(\mathbb{P}^{1|0})$ consisting of two super-affine spaces $\mathbb{A}^{k-l|k+l}$ glued along a certain relation \eqref{typ_rel}.
\end{theorem}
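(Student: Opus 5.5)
We would follow the two-step scheme announced at the beginning of this section: first produce a normal form for points of $Gr^\lambda$, then check that two normal forms define the same point only if they coincide. The first step starts from Lemma \ref{point_norm}(1). Since $\lambda=(k,l|0)$ is typical, every point $A[t^\lambda]$ can be written as $\widetilde A[t^\lambda]$, where $\widetilde A$ has bottom row $(0,0\,|\,1)$. From then on we only use right multiplications by matrices $C\in G\llbracket t\rrbracket\cap t^{\lambda}G\llbracket t\rrbracket t^{-\lambda}$ (these do not change the class, by \eqref{stab_t_lambda}), together with left multiplications that preserve the bottom row. Concretely, by \eqref{stab_t_lambda} the allowed right factors are exactly those whose $(i,j)$ entry is divisible by $t^{\lambda_i-\lambda_j}$ whenever $\lambda_i>\lambda_j$. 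So the $(1,2)$ entry must be divisible by $t^{k-l}$, the $(1,\hat1)$ entry by $t^k$, and the $(2,\hat1)$ entry by $t^l$, while the lower-triangular entries are unrestricted.

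\emph{Even block.} The classical $GL_2$ part reduces the $2\times 2$ block $\widetilde a$ to the standard normal form of the $GL_2$ Schubert cell of type $(k,l)$, as in \cite{peerenboom2021affine}. Since $\widetilde a(0)$ is invertible, one of $\widetilde a_{11}(0)$ and $\widetilde a_{21}(0)$ is a unit; this splits the argument into two charts, which together give the even base $\mathbb{P}^{1|0}$. In the chart where $\widetilde a_{11}(0)$ is a unit, column operations (using lower-triangular factors and diagonal units, both of which lie in the stabilizer) bring $\widetilde a$ to the form
\[
\left(\begin{array}{cc}1&0\\ u&1\end{array}\right),\qquad \deg_t u<k-l.
\]
This produces the even coordinates $\mathbb{A}^{k-l|0}$; the other chart is symmetric.

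\emph{Odd column.} Next we right-multiply by the elementary matrix with odd entries $t^k\theta_1$ and $t^l\theta_2$ in positions $(1,\hat1)$ and $(2,\hat1)$. These are admissible by the divisibility rule above, and they add $t^k\theta_1\cdot(\text{col}_1)+t^l\theta_2\cdot(\text{col}_2)$ to the odd column. This lets us reduce $\widetilde\alpha_1$ modulo $t^k$ and then $\widetilde\alpha_2$ modulo $t^l$, working in the basis adapted to the first two columns. The result is $k+l$ odd coordinates, namely the truncated coefficients, in the same manner as the proof of Theorem \ref{th11}. This matches the superdimension $k-l\,|\,k+l$ predicted by Theorem \ref{gen}, and it identifies the fibre over each chart with $\mathbb{A}^{k-l|k+l}$.

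\emph{Uniqueness and gluing.} For the second step we take two normal forms $A,B$ in the same chart, invert $B$ by the super Cramer rule \eqref{sup_cram}, and verify that $B^{-1}A$ satisfies the divisibility conditions above only when $u_A=u_B$ and the truncated odd coefficients agree. This is done degree by degree, exactly as in the $1|1$ computation. Finally, on the overlap where both $\widetilde a_{11}(0)$ and $\widetilde a_{21}(0)$ are units, we take a normal form in one chart, renormalize it in the other chart, and read off the transition. The even transition is the classical one, $u\mapsto u^{-1}$ truncated appropriately. The odd transition mixes $\widetilde\alpha_1$ and $\widetilde\alpha_2$ through the series $u$ and its inverse truncated at the appropriate order, and this gives the relation \eqref{typ_rel}. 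We expect this last step to be the main obstacle. The renormalization introduces products of the series $u$ with the odd series, each of which must then be truncated modulo different powers of $t$, so the formula has to be tracked coefficientwise. We would first do it for $l=0$ and small $k-l$ to guess the general pattern, and then verify that the transition maps satisfy the cocycle condition, so that the two affine superspaces glue to a superscheme over $\mathbb{P}^{1|0}$.
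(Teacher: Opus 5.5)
There is a genuine error in the even-block step, and everything after it inherits it. By your own divisibility rule, right multiplication by the stabilizer can add an \emph{arbitrary} multiple of the second column to the first. It can change the second column only by a unit factor plus a $t^{k-l}$-multiple of the first. So the first column carries no information, and ``$\widetilde a_{11}(0)$ is a unit'' does not define a chart: for example $\left(\begin{smallmatrix}1&1\\0&1\end{smallmatrix}\right)$ and $\left(\begin{smallmatrix}0&1\\-1&1\end{smallmatrix}\right)$ differ by a lower-triangular factor. Worse, your normal form $\left(\begin{smallmatrix}1&0\\u&1\end{smallmatrix}\right)$ is itself lower triangular, hence lies in $G\llbracket t\rrbracket\cap t^{\lambda}G\llbracket t\rrbracket t^{-\lambda}$, so all these matrices represent the base point $[t^\lambda]$. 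This has three consequences:
\begin{itemize}
\item Your uniqueness check must fail, since the even block of $B^{-1}A$ is $\left(\begin{smallmatrix}1&0\\u_A-u_B&1\end{smallmatrix}\right)$, which meets the divisibility conditions for every $u_A,u_B$.
\item The form cannot be reached by column operations anyway: making the second column $(0,1)$ forces $a_{12}\equiv 0 \bmod t^{k-l}$.
\item What you would actually parametrize is only the odd fibre $\mathbb{A}^{0|k+l}$ over a single point of the base.
\end{itemize}
The true invariant is the second column, $[a_{12}:a_{22}] \bmod t^{k-l}$. That is why the paper uses the charts $U_0=\{a_{22}\text{ invertible}\}$ and $U_1=\{a_{12}\text{ invertible}\}$. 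Its normal forms are the upper unipotent $\left(\begin{smallmatrix}1&a_0\\0&1\end{smallmatrix}\right)$ with $\deg_t a_0<k-l$ on $U_0$, and $\left(\begin{smallmatrix}0&1\\1&a_1\end{smallmatrix}\right)$ on $U_1$. Note also that left multiplications are not admissible normalizing moves: $gA[t^\lambda]$ is in general a different point of $Gr^\lambda$.

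With the even block corrected, your odd-column reduction goes through as in the paper, now against the columns $(1,0)$ and $(a_0,1)$. The $t^l\theta_2$ factor gives $\alpha_{2,0}=\alpha_2 \bmod t^l$ and shifts $\alpha_1$ by $a_0t^l\theta_2$. The $t^k\theta_1$ factor then gives $\alpha_{1,0}=\alpha_1+a_0(\alpha_{2,0}-\alpha_2)\bmod t^k$.

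The remaining gap is the gluing \eqref{typ_rel}, which is the real content of the statement beyond the dimension count. You leave it to be guessed from small cases, and your proposed test case $l=0$ is excluded by the hypothesis $k>l>0$. No guessing is needed. Even block-diagonal stabilizer factors do not touch the odd column, and the odd elementary factors add exactly odd elements of the lattice $L=\widetilde a\operatorname{diag}(t^k,t^l)R\llbracket t\rrbracket^{2}$. So the odd coordinates are coordinates on the quotient by $L$, a free module of rank $k+l$, and the transition is just a change of basis of that quotient.

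The paper reads the transition off by expressing $\alpha_{i,0}$ and $\alpha_{i,1}$ through the original $\alpha_1,\alpha_2$ and using $a_0a_1=1$ on the overlap. The odd coordinates then agree modulo $t^l$, and one further relation modulo $t^k$ links $\alpha_{2,1}-a_1\alpha_{1,1}$ with $\alpha_{2,0}-a_0^{-1}\alpha_{1,0}$. This is packaged as the maps $\varphi_{01},\varphi_{10}$. With only two charts, the sole compatibility to verify is that these are mutually inverse; there is no cocycle condition to check.
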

\begin{pf}
We chose a representative $A\cdot[t^\lambda]\in Gr^\lambda$ in the way described in $(1)$ of the lemma \ref{point_norm}. One can cover $Gr^\lambda$ with two charts $U_0 = \{ a_{22} \text{ is invertible } \}$ and $U_1 = \{ a_{12} \text{ is invertible } \}$. 
Restricting to $U_0$, one can transform the representative as follows:
\begin{align*}
A\cdot [t^\lambda]=
\left[\left( 
 \arraycolsep=2pt\def\arraystretch{1.5}
 \begin{array}{cc|c}
a_{11}&a_{12}&\alpha_1  \\ 
a_{21}&a_{22}&\alpha_2  \\ \hline
0&0&1
\end{array}
\right)\cdot
\left( 
 \arraycolsep=2pt\def\arraystretch{1.5}
 \begin{array}{cc|c}
t^{k}&0&0  \\ 
0&t^{l}&0  \\ \hline
0&0&1
\end{array}
\right)
 \right] &=
 \left[\left( 
 \arraycolsep=2pt\def\arraystretch{1.5}
 \begin{array}{cc|c}
a_{11}t^k&a_{12}t^l&\alpha_1  \\ 
a_{21}t^k&a_{22}t^l&\alpha_2  \\ \hline
0&0&1
\end{array}
\right)\cdot
\left( 
 \arraycolsep=2pt\def\arraystretch{1.5}
 \begin{array}{cc|c}
a_{22}\operatorname{det}(a)^{-1}&0&0  \\ 
-a_{21}\operatorname{det}(a)^{-1}t^{k-l}&a_{22}^{-1}&0  \\ \hline
0&0&1
\end{array}
\right)
\right] \\
&
=\left[\left(
 \arraycolsep=2pt\def\arraystretch{1.5}
 \begin{array}{cc|c}
t^k&\dfrac{a_{12}}{a_{22}}t^l&\alpha_1  \\ 
0&t^l&\alpha_2  \\ \hline
0&0&1
\end{array}
\right)\cdot
\left( 
 \arraycolsep=2pt\def\arraystretch{1.5}
 \begin{array}{cc|c}
1&x&0  \\ 
0&1&0  \\ \hline
0&0&1
\end{array}
\right)
\right]
=\left[\left( 
 \arraycolsep=2pt\def\arraystretch{1.5}
 \begin{array}{cc|c}
t^k&a_0t^l&\alpha_1  \\ 
0&t^l&\alpha_2  \\ \hline
0&0&1
\end{array}
\right)\right],
\end{align*}
where we can choose $x = - \sum_{j\ge k-l}\left(\dfrac{a_{12}}{a_{22}}\right)^{(j)}t^{j-k+l}$, so that $a_0(t) = \sum_{j<k-l}\left(\dfrac{a_{12}}{a_{22}}\right)^{(j)}t^{j}$.  \\
We proceed the transformation as follows:
\begin{align*}
\left[\left( 
 \arraycolsep=2pt\def\arraystretch{1.5}
 \begin{array}{cc|c}
t^k&a_0t^l&\alpha_1  \\ 
0&t^l&\alpha_2  \\ \hline
0&0&1
\end{array}
\right)\cdot
\left( 
 \arraycolsep=2pt\def\arraystretch{1.5}
 \begin{array}{cc|c}
1&0&x_1  \\ 
0&1&x_2 \\ \hline
0&0&1
\end{array}
\right)
\right]=
\left[\left( 
 \arraycolsep=2pt\def\arraystretch{1.5}
 \begin{array}{cc|c}
t^k&a_0t^l&\alpha_{1,0}  \\ 
0&t^l&\alpha_{2,0}  \\ \hline
0&0&1
\end{array}
\right)
\right]=
\left[\left( 
 \arraycolsep=2pt\def\arraystretch{1.5}
 \begin{array}{cc|c}
1&a_0&\alpha_{1,0}  \\ 
0&1&\alpha_{2,0}  \\ \hline
0&0&1
\end{array}
\right)
\left( 
 \arraycolsep=2pt\def\arraystretch{1.5}
 \begin{array}{cc|c}
t^k&0&0  \\ 
0&t^l&0  \\ \hline
0&0&1
\end{array}
\right)
\right]
,
\end{align*}
where $x_1$ and $x_2$ can be chosen, such that 
\begin{align*}
\begin{cases}
    \operatorname{deg}_t(a_0)<k-l&  a_0 = \dfrac{a_{12}}{a_{22}} \ \mod \ t^{k-l},\\ \operatorname{deg}_t(\alpha_{1,0})<k & \alpha_{1,0}=\alpha_1 +a_0(\alpha_{2,0} - \alpha_2) \ \mod \ t^{k},\\
\operatorname{deg}_t(\alpha_{2,0})<l & \alpha_{2,0} = \alpha_2 \ \mod \ t^{l}.
\end{cases}
\end{align*}
Now we rewrite the relation \eqref{mod_stab_t_lambda} for the dimension $2|1$. One can also replace the inverse matrix $B^{-1}$ with the adjoint matrix(matrix of cofactors), computed by the super-Cramer rule \eqref{sup_cram}, since they differ by the diagonal matrix $\operatorname{diag} (\operatorname{Ber}(B), \operatorname{Ber}(B), \operatorname{Ber}^*(B))$, which commutes with $t^\lambda$ and thus lies in the same class modulo stabilizer.
\begin{align}\label{slay_condition}
\left( 
 \arraycolsep=2pt\def\arraystretch{1.5}
 \begin{array}{cc|c}
c_{11}&c_{12}t^{k-l}&\gamma_1 t^{k} \\ 
c_{21}t^{l-k}&c_{22}&\gamma_2 t^{l} \\ \hline
0&0&1
\end{array} \right)
= 
\left( 
 \arraycolsep=2pt\def\arraystretch{1.5}
 \begin{array}{cc|c}
1&-b_{0}&-\beta_{1,0}+b_{0}\beta_2\\ 
0&1&-\beta_{2,0} \\ \hline
0&0&1
\end{array} \right)
\left( 
 \arraycolsep=2pt\def\arraystretch{1.5}
 \begin{array}{cc|c}
1&a_0&\alpha_{1,0}  \\ 
0&0&\alpha_{2,0}  \\ \hline
0&0&1
\end{array} \right).
\end{align}
 From \eqref{slay_condition} we see that the matrices
 \begin{align*}
 A[t^\lambda] = \left[\left( 
 \arraycolsep=2pt\def\arraystretch{1.5}
 \begin{array}{cc|c}
1&a_0&\alpha_{1,0}  \\ 
0&1&\alpha_{2,0}  \\ \hline
0&0&1
\end{array}
\right)
\left( 
 \arraycolsep=2pt\def\arraystretch{1.5}
 \begin{array}{cc|c}
t^k&0&0  \\ 
0&t^l&0  \\ \hline
0&0&1
\end{array}
\right)
\right]
 \end{align*}
 form the full system of representatives on $U_0$, since:
\begin{align*}
\begin{cases}
c_{12}t^{k-l} &= a_0 - b_0,\\
\gamma_1t^{k} &= \alpha_{1,0} - \beta_{1,0}-b_0(\alpha_{2,0}-\beta_{2,0}),\\
\gamma_2t^{l} &= \alpha_{2,0} - \beta_{2,0},
\end{cases}
\quad \Longleftrightarrow \quad
\begin{cases}
a_0 &= b_0,\\
\alpha_{1,0} &= \beta_{1,0},\\
\alpha_{2,0} &= \beta_{2,0}.
\end{cases}
\end{align*}
Applying similar transformations, one can get the full system of representatives on $U_1$ of the form:
\begin{align*}
\left[\left( 
 \arraycolsep=2pt\def\arraystretch{1.5}
 \begin{array}{cc|c}
0&t^l&\alpha_{1,1}  \\ 
t^k&a_1t^l&\alpha_{2,1}  \\ \hline
0&0&1
\end{array}
\right)
\right],
\end{align*}
where $\operatorname{deg}_t(a_1)<k-l$, $\operatorname{deg}_t(\alpha_{1,1})<l$, $\alpha_{1,1} = \alpha_1 \ \mod \ t^l$ and $\operatorname{deg}_t(\alpha_{2,1})<k$, $\alpha_{2,1} = \alpha_2 + a_1(\alpha_{1,1}-\alpha_1)$. Now we need to derive the transition maps on the intersection $U_0\cap U_1$:
\begin{align*}
\alpha_{1,0} = \alpha_1 + (\alpha_{2,0}-\alpha_2)a_0 \mod t^k, & \qquad \alpha_{1,1} = \alpha_1 \mod t^l, \\
\alpha_{2,0} = \alpha_2 \mod t^l, & \qquad \alpha_{2,1} = \alpha_2 + (\alpha_{1,1}-\alpha_1)a_1  \mod t^k.
\end{align*}
Since, on the intersection we have $a_0 = \dfrac{1}{a_1}$, one can rewrite the relations as follows:
\begin{align*}
\begin{cases}
 a_0 = \dfrac{1}{a_1} & \mod t^{k-l}, \\
\alpha_{2,1} - a_1\alpha_{1,1} = \alpha_{2,0} - \dfrac{1}{a_0}\alpha_{1,0} & \mod t^k,\\
\alpha_{1,1} = \alpha_{1,0}& \mod t^{l}, \\
\alpha_{2,1} = \alpha_{2,0}& \mod t^{l}.   
\end{cases}
\end{align*}
We will now interpret the obtained relations as a super-scheme, whose sections we will denote by $\mathcal{P}_{k,l}$. Denote by $\Pi$ the shift of parity. Then $\mathbb{C}[t]/t^{k-l}\times\Pi(\mathbb{C}[t]/t^{k}\times\mathbb{C}[t]/t^{l})\cong \mathbb{A}^{k-l|k+l}$ as affine spaces. We denote projection and inclusion, respectively:
$$\pi_l : \Pi(\mathbb{C}[t]/ t^k) \rightarrow \Pi(\mathbb{C}[t]/ t^l),$$ 
$$i_k : \Pi(\mathbb{C}[t]/ t^l) \rightarrow \Pi(\mathbb{C}[t]/ t^k).$$ 
They are even superlinear maps between odd affine spaces. We set the space of sections over each open set as follows:
\begin{align}
\mathcal{P}_{k, l}(U_0) = \Pi(\mathbb{C}[t]/t^{k}\times\mathbb{C}[t]/t^{l}),\\
\mathcal{P}_{k, l}(U_1)=\Pi(\mathbb{C}[t]/t^{l}\times\mathbb{C}[t]/t^{k}).
\end{align}
The restriction map matrix is
$$\varphi_{01} =\left( \begin{array}{cc} \pi_l & 0 \\ a_1(i_k\circ\pi - id) & i_k \end{array} \right) : \mathcal{P}_{k, l}(U_0) \rightarrow \mathcal{P}_{k, l}(U_0\cap U_1),$$
and in the inverse direction, acts the matrix 
$$\varphi_{10} =\left( \begin{array}{cc} i_k  & a_0(i_k\circ\pi_l - id) \\ 0 & \pi_l \end{array} \right) : \mathcal{P}_{k, l}(U_1) \rightarrow \mathcal{P}_{k, l}(U_0\cap U_1).$$
One can check that these two matrices are inverse to each other. 

Thus, the resulting super-variety can be presented as follows:
\begin{itemize}
    \item Take coordinates $z_0$ and $z_1$ on a super-projective line $\mathbb{P}^{1|0}\cong \mathbb{P}^1\times \{pt\} = U_0 \cup U_1$, the coordinates on the intersection are related as $z_0 = \dfrac{1}{z_1}$. \\
    \item The fiber over the point $z_0\in U_0$ is then parametrized by $(\widetilde{z_0}, \xi_0, \eta_0)$, where $\widetilde{z_0}\in \widetilde{\mathbb{C}[t]}/t^{k-l}$ and $\widetilde{z_0}(0) = z_0$ and $(\xi_0, \eta_0)\in \Pi(\mathbb{C}[t]/t^{k}\times\mathbb{C}[t]/t^{l})$. \\
    \item Similarly, over the point $z_1\in U_1$, write $(\widetilde{z_1}, \xi_1, \eta_1)$, with $(\xi_1, \eta_1) \in \Pi(\mathbb{C}[t]/t^{l}\times\mathbb{C}[t]/t^{k})$.
\end{itemize}
The following picture summarizes this description:
\begin{equation}\label{typ_rel}
\begin{tikzcd}
\widetilde{\mathbb{C}[t]}/t^{k-l}\times\Pi(\mathbb{C}[t]/t^{k}\times\mathbb{C}[t]/t^{l}) \arrow[d] & &\widetilde{\mathbb{C}[t]}/t^{k-l}\times\Pi(\mathbb{C}[t]/t^{l}\times\mathbb{C}[t]/t^{k}) \arrow[d]  \\
U_0\arrow[rd]&&U_1\arrow[ld]\\
&U_0\cap U_1 \\
&\begin{array}{c}\widetilde{z_0}\widetilde{z_1} = 1 \\ \left(\begin{array}{c}\xi_0 \\ \eta_0 \end{array}\right) = \varphi_{10}\left(\begin{array}{c}\xi_1 \\ \eta_1 \end{array}\right). \end{array}&
\end{tikzcd}
\end{equation}
\end{pf}
\begin{theorem}[A-typical Schubert variety]\label{th21atyp}
    For an a-typical super-weight $\lambda = (k, l|0)$ with $k>0>l$ orbit $Gr^\lambda$ is parametrized by the space of sections of a super-scheme $\mathcal{P}'_{k, -l}$, consisting of the even-projective line, multiplied by a point $\mathbb{P}^{1|0}$ with $\mathcal{P}'_{k, -l}(\mathbb{P}^{1|0})$ consisting of two super-affine spaces $\mathbb{A}^{k-l|k-l}$ glued along a certain relation \eqref{atyp_rel}.
\end{theorem}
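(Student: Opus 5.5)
The plan is to mirror the proof of Theorem \ref{th21typ}, starting from the normal form in part (2) of Lemma \ref{point_norm}. For $\lambda=(k,l|0)$ with $k>0>l$, put $l'=-l>0$. Every point of $Gr^\lambda$ is then represented by a matrix $\widetilde{A}$ whose last row is $(0,\widetilde{\alpha}_2'\,|\,1)$. We cannot clear the odd entry $\alpha_2'$: the column operation that kills it would need the factor $t^{l}=t^{-l'}$, which is not a power series. This surviving odd entry, multiplied against $t^{l}$, is what should produce the extra odd directions. It also explains why the odd dimension comes out as $k+l'=k-l$ instead of the typical $k+l$.

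\textbf{Charts and normal forms.} I will cover $Gr^\lambda$ by the same two charts as in the typical case, $U_0=\{a_{22}\text{ invertible}\}$ and $U_1=\{a_{12}\text{ invertible}\}$, whose reduced base is $\mathbb{P}^{1|0}$.

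On $U_0$ I first right-multiply by the same $GL_2$-type matrix as before. Since $k-l=k+l'>0$, the entry $-a_{21}\det(a)^{-1}t^{k-l}$ is still a power series, so this step is legitimate. Next, unipotent column operations truncate:
\begin{itemize}
\item the even entry $a_0=a_{12}/a_{22}$ modulo $t^{k-l}$, giving $k-l$ even coordinates;
\item the odd entry $\alpha_{1}$ modulo $t^{k}$.
\end{itemize}
In the second column everything is now multiplied by $t^{l}=t^{-l'}$. The relevant odd quantities are $\alpha_2$, which sits against $t^0$ in the third column, and $\alpha_2' t^{l}$ in the bottom row. They interact through the Berezinian-type combination $a_{22}-\alpha_2\alpha_2'$. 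I expect that $\alpha_2$ can be killed completely and that $\alpha_2'$ survives only modulo $t^{l'}$.

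So the representative on $U_0$ should take the form
$$\left(\begin{array}{cc|c}1&a_0&\alpha_{1,0}\\0&1&0\\ \hline 0&\alpha'_{2,0}&1\end{array}\right)[t^\lambda],\qquad \deg a_0<k-l,\quad \deg\alpha_{1,0}<k,\quad \deg\alpha'_{2,0}<-l.$$
This gives the superdimension $k-l\,|\,k-l$. It is consistent with the dimension formula of Theorem \ref{gen}, where the odd part picks up $\lambda_{\hat 1}-\lambda_2=-l$ from the pair $(\hat1,2)$.

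\textbf{Full system of representatives.} I will verify this exactly as in \eqref{slay_condition}. I replace $B^{-1}$ by the super-Cramer adjugate from \eqref{sup_cram}, which is allowed because the diagonal factor commutes with $t^\lambda$. I then require $B^{-1}A$ to have the stabilizer shape: entry $(1,2)$ divisible by $t^{k-l}$, entry $(1,3)$ by $t^{k}$, and entry $(3,2)$ by $t^{-l}$, with no condition where the exponent is negative.

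The degree bounds should then force $a_0=b_0$, $\alpha_{1,0}=\beta_{1,0}$ and $\alpha'_{2,0}=\beta'_{2,0}$. The nilpotent correction terms, such as $b_0\beta'$, are handled by peeling off conditions in order of $t$-degree.

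\textbf{Gluing and main obstacle.} On $U_1$ I repeat the procedure with the roles of the two even columns exchanged, and then compute the transition maps on $U_0\cap U_1$ by comparing both normal forms with the common unnormalized $A$. The even coordinates satisfy $\widetilde z_0\widetilde z_1=1$ modulo $t^{k-l}$, as before. For the odd coordinates I expect a lower-triangular matrix $\varphi'_{10}$, with entries built from $\pi$, $i$ and multiplication by $a_0$ (respectively $a_1$), now acting on $\Pi(\mathbb{C}[t]/t^{k}\times\mathbb{C}[t]/t^{-l})$. The resulting gluing diagram will be labelled \eqref{atyp_rel}.

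The main obstacle is this odd gluing. On $U_1$ the surviving odd coordinates are a different pair: the lower-left entry is now paired with $t^{k}$, not $t^{l}$. So the transition mixes a row coordinate $\alpha'$ with a column coordinate $\alpha$, with truncation degrees that do not simply swap. I would compute it explicitly through the Berezinian $\operatorname{Ber}(A)$, which is invariant under the right $G\llbracket t\rrbracket$ action up to units. Finally, I would check that $\varphi'_{01}\varphi'_{10}=\mathrm{id}$.
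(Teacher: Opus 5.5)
\textbf{Where your proposal agrees with the paper.} Your treatment of $U_0$ is the paper's argument. This covers the normal form with $\deg a_0<k-l$, $\deg\xi_0<k$, $\deg\xi_0'<-l$, the dimension count, and the full-system check via the shape of the stabilizer.

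\textbf{The gap: the gluing.} The gap is the gluing, which is the actual content of the theorem. You leave it uncomputed, and the form you predict for it rests on a wrong picture of $U_1$. The second chart is obtained by exchanging the two even \emph{rows}, not columns. Let $P$ be the constant permutation matrix swapping $e_1$ and $e_2$. Then $U_1=P\cdot U_0$, and the representatives on $U_1$ are $P$ times those on $U_0$:
\[
M_1[t^\lambda]=\begin{pmatrix}0&1&0\\1&a_1&\xi_1\\0&\xi_1'&1\end{pmatrix}[t^\lambda],
\]
exactly as in the paper. In this normal form:
\begin{itemize}
\item The row coordinate $\xi_1'$ stays in position $(3,2)$, against $t^{l}$, and is truncated modulo $t^{-l}$.
\item The $(3,1)$ entry is always cleared by Lemma~\ref{point_norm}(2), so nothing in the bottom row is paired with $t^k$.
\item The column coordinate moves to row $2$ but is still truncated modulo $t^k$.
\end{itemize}
So both charts carry the same fibre $\Pi(\mathbb{C}[t]/t^k\times\mathbb{C}[t]/t^{-l})$, and no truncation degrees swap. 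There is therefore no room for $\pi$, $i$, or for mixing $\xi$ with $\xi'$. That lower-triangular structure belongs to the typical case, where the column coordinates of lengths $k$ and $l$ really do trade places.

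\textbf{How to compute the transition.} What you need is the condition $t^{-\lambda}M_0^{-1}M_1t^{\lambda}\in G\llbracket t\rrbracket$. This is the same computation as your full-system check, with $M_0^{-1}$ having rows $(1,\,-a_0+\xi_0\xi_0',\,-\xi_0)$, $(0,1,0)$ and $(0,\,-\xi_0',\,1)$. The Berezinian is a single scalar and cannot determine the transition.
\begin{itemize}
\item The $(1,3)$ entry gives $\xi_0\equiv(-a_0+\xi_0\xi_0')\xi_1\equiv-\tilde z_0\xi_1\pmod{t^k}$. The cubic term dies because substituting $\xi_0\equiv-a_0\xi_1$ produces $\xi_1^2=0$.
\item The $(3,2)$ entry gives $\xi_1'\equiv a_1\xi_0'\pmod{t^{-l}}$.
\end{itemize}
Together these are the paper's diagonal $\varphi_{01}=\operatorname{diag}(-\tilde z_0^{-1},\tilde z_0^{-1})$.

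\textbf{The even relation is not ``as before''.} Do not take $\tilde z_0\tilde z_1=1$ for granted. The $(1,2)$ entry gives $a_0a_1\equiv1+\xi_0(\xi_0'a_1-\xi_1')\pmod{t^{k-l}}$, and $\xi_0'a_1-\xi_1'$ is only divisible by $t^{-l}$, not by $t^{k-l}$. For example, take $k=1$, $l=-1$ and write $a_1=a_{1,0}+a_{1,1}t$. Then this reads $a_0a_1\equiv1+a_{1,1}\xi_0\xi_0'\,t\pmod{t^2}$. The paper's argument that the brackets vanish only shows this modulo $t^{-l}$. In that example the correction can be removed by the coordinate change $\tilde z_i=a_i-\tfrac12\xi_i\xi_i'$. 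In general, some such argument has to be supplied.
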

\begin{pf}
By (2) of lemma \ref{point_norm} for $k>0>l$ we can consider the representatives of a simplified form. We will perform further simplification, until we get the full system of representatives, for which we will check the condition \eqref{mod_stab_t_lambda}. In a chart $U_0 = \{ a_{22} \text{ is invertible}\}$, the simplification goes as follows:
\begin{align*}
\left[\left( 
 \arraycolsep=2pt\def\arraystretch{1.2}
 \begin{array}{cc|c}
a_{11}t^{k}&a_{12}t^l&\alpha_1  \\ 
a_{21}t^k&a_{22}t^l&\alpha_2  \\ \hline
0&\alpha't^l&1
\end{array} \right)
\left( 
 \arraycolsep=2pt\def\arraystretch{1.5}
 \begin{array}{cc|c}
a_{22}&0&0  \\ 
-a_{21}(1+\dfrac{\alpha_2\alpha'}{a_{22}})t^{k-l}&1&0  \\ \hline
\alpha'a_{21}t^k&0&1
\end{array}
\right)
\right] 
= \left[\left( 
 \arraycolsep=2pt\def\arraystretch{1.5}
 \begin{array}{cc|c}
(\operatorname{det}(a)+\alpha'a_{21}(\alpha_2\dfrac{a_{21}}{a_{22}} -\alpha_1))t^{k}&a_{12}t^l&\alpha_1  \\ 
0& a_{22}t^{l}&\alpha_2  \\ \hline
0&\alpha_{2}'t^l&1
\end{array} \right)\right]
\end{align*}
Since the expression in brackets in the top-left corner of the latest matrix is invertible, the first column can be divided by it. Proceeding with the transformation:
\begin{align*}
= \left[\left( 
 \arraycolsep=2pt\def\arraystretch{1.5}
 \begin{array}{cc|c}
t^{k}&a_{12}t^l&\alpha_1  \\ 
0&a_{22}t^l &\alpha_2  \\ \hline
0&\alpha't^l&1
\end{array} \right) 
 \left( 
 \arraycolsep=2pt\def\arraystretch{1.5}
 \begin{array}{cc|c}
1&0&x \\ 
0&a_{22}^{-1}&-\frac{\alpha_2}{a_{22}}t^{-l} \\ \hline
0&0&1+\dfrac{\alpha'\alpha_{2}}{a_{22}}
\end{array}
\right)
\right]=
\left[\left( 
 \arraycolsep=2pt\def\arraystretch{1.5}
 \begin{array}{cc|c}
t^{k}&\dfrac{a_{12}}{a_{22}}t^l&\alpha_1(1+\dfrac{\alpha'\alpha_{2}}{a_{22}})-\alpha_2\dfrac{a_{12}}{a_{22}}+xt^k  \\ 
0&t^l&0  \\ \hline
0&\dfrac{\alpha'}{a_{22}}t^l&1
\end{array} \right)
\right]
\end{align*}
\begin{align*}
=
\left[\left( 
 \arraycolsep=2pt\def\arraystretch{1.5}
 \begin{array}{cc|c}
t^{k}&\dfrac{a_{12}}{a_{22}}t^l&\widetilde{\alpha_{0}}  \\ 
0&t^l&0  \\ \hline
0&\dfrac{\alpha'}{a_{22}}t^l&1
\end{array} \right)
\left( 
 \arraycolsep=2pt\def\arraystretch{1.5}
 \begin{array}{cc|c}
1&0&0 \\ 
0&1&0  \\ \hline
0&x&1
\end{array} \right)
\left( 
 \arraycolsep=2pt\def\arraystretch{1.5}
 \begin{array}{cc|c}
1&y&0 \\ 
0&1&0  \\ \hline
0&0&1
\end{array} \right)
\right]=
\left[\left( 
 \arraycolsep=2pt\def\arraystretch{1.5}
 \begin{array}{cc|c}
t^{k}&a_{0}t^l&\widetilde{\alpha_{0}}  \\ 
0&t^l&0  \\ \hline
0&\alpha'_0t^l&1
\end{array} \right)\right], 
\end{align*}
where we choose $x_0 = \alpha'/a_{22} - \pi_{-l}(\alpha'/a_{22})$ and $y = a_{12}/a_{22} +\widetilde{\alpha_0}x_0 - \pi_{k-l}(a_{12}/a_{22}+x\widetilde{\alpha_0})$. The map $\pi_k$ is projection composed with inclusion, i.e. it takes the first $k-1$ powers in $t$.\\
Thus, $a_0= \dfrac{a_{12}}{a_{22}} + \widetilde{\alpha_0}x_0 \ \mod \ t^{k-l}$,  $deg_t(a_{0})<k-l$ and $\widetilde{\alpha_{0}} = \alpha_1(1+\dfrac{\alpha'\alpha_{2}}{a_{22}})-\dfrac{a_{12}}{a_{22}}\alpha_2 \ \mod \ t^k$, $deg_t(\widetilde{\alpha_{0}})<k$ and $\alpha_{0}' = \dfrac{\alpha'}{a_{22}} \ \mod t^{-l}$, $deg_t(\alpha_0')<-l$. \\
Now we write down the matrix form of the conjugacy by stabilizer:
\begin{align}\label{atyp_U0_cond}
\left( 
 \arraycolsep=2pt\def\arraystretch{1.5}
 \begin{array}{cc|c}
c_{11}&c_{12}t^{k-l}&\gamma_1 t^{k} \\ 
c_{21}t^{l-k}&c_{22}&\gamma_2 t^{l} \\ \hline
\gamma_1't^{-k}&\gamma_2't^{-l}&1
\end{array} \right)
= 
\left( 
 \arraycolsep=2pt\def\arraystretch{1.5}
 \begin{array}{cc|c}
1&-b_{0}+\tilde{\beta_0}\beta_0'&-\tilde{\beta_0} \\ 
0&1 &0\\ \hline
0&-\beta_0'&1
\end{array} \right)
\left( 
 \arraycolsep=2pt\def\arraystretch{1.5}
 \begin{array}{cc|c}
1&a_{0}&\tilde{\alpha_0}  \\ 
0&1&0  \\ \hline
0&\alpha_0'&1
\end{array} \right).
\end{align}
From which we derive that the obtained system of representatives is full:
\begin{align}\label{atyp_U0_coord}
\begin{cases}
    \gamma_1t^k = \widetilde{\alpha_0} - \widetilde{\beta}_0\\
    \gamma_2't^{-l}= \alpha_0' - \beta_0' \\
    c_{12}t^{k-l} = a_0 - b_0 +\widetilde{\beta}_0\beta_0' - \widetilde{\beta}_0\alpha_0'
\end{cases}
& \Longleftrightarrow & 
\begin{cases}
     \xi_0 := \widetilde{\alpha}_{0} = \widetilde{\beta}_{0} & \mod \ t^{k}\\ 
\xi_0' := \alpha_{0}' = \beta_{0}' & \mod \ t^{-l}\\
 \tilde{z}_0 := a_0 = b_0 & \mod \ t^{k-l}
\end{cases}
\end{align}
We introduce the same even coordinates as in the theorem \ref{th21typ} and odd coordinates are written in the relations above. 
Similar transformations on $U_1 = \{ a_{12} \text{ is invertible}\} $ result in the following system of representatives:
\begin{align*}
\left[\left( 
 \arraycolsep=2pt\def\arraystretch{1.5}
 \begin{array}{cc|c}
0&t^l&0  \\ 
t^k&a_{1}t^l&\widetilde{\alpha}_{1}  \\ \hline
0&\alpha_{1}'t^l&1
\end{array} \right)\right], 
\end{align*}
where $a_1= \dfrac{a_{22}}{a_{12}} + \widetilde{\alpha_1}x_1\ \mod \ t^{k-l}$ with $x_1 = \alpha'/a_{12}- \pi_{-l}(\alpha'/a_{12})$,  $deg_t(\widetilde{\alpha}_1)<k$ and $\widetilde{\alpha}_{1} = \alpha_2(1+\dfrac{\alpha'\alpha_1}{a_{12}}) - \dfrac{a_{22}}{a_{12}}\alpha_1 \ \mod \ t^k$ and $\alpha_{1}' = \dfrac{\alpha'}{a_{12}} \ \mod \ t^{-l}$. 
Then the condition \eqref{atyp_U0_cond} in the introduced coordinates \eqref{atyp_U0_coord} looks as follows:
\begin{align*}
\begin{cases}
    \widetilde{z}_1 := b_1 = a_1 & \mod t^{k-l},\\
\xi_1: = \beta_1 = \alpha_1 & \mod t^{k},\\
\xi_1':=\beta_1' = \alpha_1' & \mod t^{-l}.
\end{cases}
\end{align*}
We associate to $U_1$ coordinates $\widetilde{z}_1 \in \widetilde{\mathbb{C}[t]}/t^{k-l}$, such that $\widetilde{z}_1(0) = z_1 \in \mathbb{C}$ (since all the even products of grassamans are zero), $(\xi_1, \xi_1')\in \Pi\mathbb{C}[t]/t^{k}\times\Pi\mathbb{C}[t]/t^{-l}$. On the intersection $U_0\cap U_1$, one can can derive the following:
\begin{align*}
\xi_0 = -\xi_1\dfrac{a_{12}}{a_{22}} &\Rightarrow -\widetilde{z}_0\xi_1 = \xi_0 \mod t^k \\
 \arraycolsep=2pt\def\arraystretch{2}\begin{array}{cc}
\widetilde{z}_0 &= \dfrac{a_{12}}{a_{22}} \\ \xi_0' &= \xi_1'\dfrac{a_{12}}{a_{22}} \end{array} &\Rightarrow \widetilde{z}_0\xi_1' = \xi_0' \mod t^{-l}
\end{align*}
\begin{align*}
\widetilde{z}_0\widetilde{z}_1 &= 1 + \xi_0\left( \dfrac{\alpha'}{a_{12}}- \dfrac{a_{22}}{a_{12}}\xi_0'\right) + \xi_1\left( \dfrac{\alpha'}{a_{22}}- \dfrac{a_{12}}{a_{22}}\xi_1'\right) + \xi_0\xi_1\left( \dfrac{\alpha'}{a_{12}}- \dfrac{a_{22}}{a_{12}}\xi_0'\right)\left( \dfrac{\alpha'}{a_{22}}- \dfrac{a_{12}}{a_{22}}\xi_1'\right) \\
&=1 + \left(\dfrac{\xi_0}{a_{12}} + \dfrac{\xi_1}{a_{22}} \right)\alpha' -\left( \dfrac{a_{12}}{a_{22}}\xi_0\xi_0'  + \dfrac{a_{22}}{a_{12}}\xi_1\xi_1'  \right)
 + \xi_0\xi_1\left( \left(\dfrac{\xi_0'}{a_{12}}+\dfrac{\xi_1' }{a_{22}}\right)\alpha'+ \xi_0\xi_1'\right)\\
 &=1 \mod t^{-l+k}.
\end{align*}
The last identity is easily derived from the first two, they turn to zero all the brackets. The term with the product of 4 grassmanns is zero, since they all are expressed via only 3 grassmann variables.\\
Thus, taking into account the gluing at the intersection, the scheme looks as follows:
\begin{equation}\label{atyp_rel}
\begin{tikzcd}
\widetilde{\mathbb{C}[t]}/t^{k-l}\times\Pi(\mathbb{C}[t]/t^{k}\times\mathbb{C}[t]/t^{-l}) \arrow[d] &&\widetilde{\mathbb{C}[t]}/t^{k-l}\times\Pi(\mathbb{C}[t]/t^{k}\times\mathbb{C}[t]/t^{-l}) \arrow[d]  \\
U_0\arrow[rd]&&U_1\arrow[ld]\\
&U_0\cap U_1& \\
&\begin{array}{c}
   \qquad \widetilde{z}_0\widetilde{z}_1=1 \\
\left(\begin{array}{c}\xi_1\\ \xi_1' \end{array}\right) =
\varphi_{01}
\left(\begin{array}{c} \xi_0 \\ \xi_0' \end{array}\right)  
\end{array}&
\end{tikzcd}
\end{equation}
Where the restriction matrix $\varphi_{01}: \mathcal{P}'_{k, -l}(U_0)\rightarrow \mathcal{P}'_{k, -l}(U_0 \cap U_1)$ is
$
\varphi_{01} =
\left(\begin{array}{cc} -\widetilde{z}_0^{-1} & 0 \\ 0 & \widetilde{z}_0^{-1} \end{array} \right).$
\end{pf}
\subsection{Super jet schemes}
The reduction of supervarieties that arise in theorems \ref{th21typ} and \ref{th21atyp} is called the jet scheme over projective line. We mention \cite{Mourtada2023} as reference on the basic definitions in this topic. Their supersymmetric analogue has never appeared in literature before.

\section*{Acknowledgements}
 
This work is done at the Center of Pure Mathematics MIPT. It is financially supported by Russian Science Foundation grant 26-11-00115.

\printcredits

\bibliographystyle{cas-model2-names}

\bibliography{cas-refs}




\end{document}